\documentclass[reqno]{amsart}
\usepackage[colorlinks,linkcolor=black,anchorcolor=black,citecolor=black,hyperindex=black,CJKbookmarks=black]{hyperref}

\newtheorem{theorem}{Theorem}[section]
\newtheorem{lemma}[theorem]{Lemma}

\theoremstyle{definition}

\newtheorem{example}[theorem]{Example}

\theoremstyle{remark}
\newtheorem{remark}[theorem]{Remark}
\newtheorem{corollary}[theorem]{Corollary}

\numberwithin{equation}{section}



\begin{document}

\title{A note on R\'{e}nyi's ``record'' problem and Engel's series}

\author {Lulu Fang}
\address{Department of Mathematics, South China University of Technology, Guangzhou 510640, P.R. China}
\email{f.lulu@mail.scut.edu.cn}

\author {Min Wu}
\address{Department of Mathematics, South China University of Technology, Guangzhou 510640, P.R. China}
\email{wumin@scut.edu.cn}


\subjclass[2010]{Primary 60F10, 60J10; Secondary 62G30, 11A67}
\keywords{Large deviations, Markov processes, Record times, Engel's series.}

\begin{abstract}
In 1973, Williams \cite{lesWil73} introduced two interesting discrete Markov processes, namely $C$-processes and $A$-processes, which are related to record times in order statistics and Engel's series in number theory respectively. Moreover, he also showed that these two processes share the same classical limit theorems, such as the law of large numbers, central limit theorem and law of the iterated logarithm. In this paper, we consider the large deviations for these two Markov processes, which indicate that there is a difference between $C$-processes and $A$-processes in the context of large deviations.
\end{abstract}

\maketitle

\section{Introduction and main results}
Let $(\Omega, \mathcal{F}, \mathbf{P})$ be a probability space and we assume that the following processes both define on it.

\subsection{C-processes}

A stochastic process $\{C_n: n \geq 1\}$ taking values in $\mathbb{N}$ will be called a \emph{C-process} if
the sequence $C_0, C_1, C_2, \cdots$ where $C_0 = 1$, forms a time-homogeneous Markov
chain with one-step transition probabilities:
\begin{equation}
p_{ij} = \frac{i}{j(j-1)}\ \text{for any}\ j \geq i+1; \ \ \ \ \  p_{ij} = 0 \ \text{for any}\ j \leq i.
\end{equation}
Next we introduce two interesting examples of $C$-processes in number theory and in order statistics respectively.

\begin{example}[Modified Engel's series]\label{m}
In particular, $\Omega = (0,1)$, $\mathcal{F}$ is the Borel $\sigma$-algebra on $(0,1)$ and $\mathbf{P}$ denotes the
Lebesgue measure on $(0,1)$. In \cite{lesRen62A}, R\'{e}nyi studied the \emph{modified Engel's series} of the form
\begin{equation*}
x = \frac{1}{d_1} + \frac{1}{(d_1 -1)d_2} +  \cdots + \frac{1}{(d_1 -1)\cdots(d_{n-1} -1)d_n} + \cdots,
\end{equation*}
where $d_n \geq 2$ is an integer and $d_{n+1} \geq d_n +1$ for any $n \in \mathbb{N}$. Then the digit sequence
$\{d_n: n \geq 1\}$ is a $C$-process (see R\'{e}nyi \cite[Eq.(5.3)]{lesRen62A}.
\end{example}

\begin{example}[Record times]\label{r}
Let $\{X_n:n \geq 1\}$ be a sequence of independent and identically distributed (i.i.d.)~real-valued random variables with a
common continuous distribution function. The \emph{record times} $L_n$ are defined by $L_0 = 1$ and
\[
L_{n+1} = \min\left\{j: X_j > X_{L_n}, j > L_n\right\} \ \text{for all}\ n \geq 0.
\]
Then $\{L_n: n \geq 1\}$ is a $C$-process (see R\'{e}nyi \cite[Eq.(20)]{lesRen62T}). For more properties and applications of
record times, see \cite{lesCha52, lesGS75, lesPfe87, lesWHK14} and the references therein.
\end{example}

\subsection{A-processes}

We say a stochastic process $\{A_n: n \geq 1\}$ taking values in $\mathbb{N}$ an \emph{A-process} if it
is identical in law to the Engel's series (see Erd\H{o}s et al.~\cite{lesE.R.S58}). That is to say, an $A$-process can be constructed as follows: an irrational number $x$ in $(0,1)$ is chosen at random according to the uniform distribution and expressed uniquely as
\[
x = \frac{1}{A_1} + \frac{1}{A_1A_2} + \cdots+ \frac{1}{A_1 A_2\cdots A_n}+\cdots,
\]
where $A_n$ are integers and $A_n \geq A_{n-1}$ for any $n \in \mathbb{N}$ with the convention that $A_0=2$. Erd\H{o}s et al.~\cite[Eq.(1.3)]{lesE.R.S58} proved that the digit sequence of Engel's series forms a time-homogeneous Markov
chain with one-step transition probabilities:
\begin{equation}\label{A}
p_{ij} = \frac{i-1}{j(j-1)}\ \text{for any}\ j \geq i \geq 2; \ \ \ \ \  p_{ij} = 0 \ \text{for any}\ j < i.
\end{equation}
Since $A$-processes are identical in law to the Engel's series, we know that an $A$-process also forms a time-homogeneous Markov chain with one-step transition probabilities defined as (\ref{A}).

In his paper \cite{lesWil73}, Williams pointed out that the $C$-process and $A$-process fulfill the following representations respectively:
\[
C_0 = 1,\ \ \ \ \ \ \ \  C_n= [C_{n -1} \exp(W_n )] + 1,
\]
\[
a_0 = 1,\ \ \ \ \ \ \ a_n = [a_{n-1} \exp(W_n)],\ \ \ \ \ \ \   A_n = a_n +1,
\]
where $[x]$ denotes the greatest integer not exceeding $x$ and $W_1, W_2, \cdots $ are i.i.d.~exponential random variables with mean 1. Here denote by $B_n^* = W_1 + \cdots + W_n$, Williams also showed that the sequences $\left\{B_n^* - \log C_n\right\}_{n \geq 1}$ and $\left\{B_n^* - \log A_n\right\}_{n \geq 1}$ are almost surely bounded, which implies that $(\log C_n)/n$ and $(\log A_n)/n$ are asymptotically close to $B_n^*/n$, i.e., the empirical mean of i.i.d.~exponential random variables with mean 1.
As a consequence, the strong law of large numbers, the central limit theorem and the law of iterated logarithm for $C$-processes and $A$-processes can be
read off from the classical results for $\{B_n^* , n\geq 1\}$. However, our following results indicate that
the approximation of $(\log C_n)/n$ by $B_n^*/n$ is enough while the approximation of $(\log A_n)/n$ by $B_n^*/n$ is not enough in the  the context of large deviations. This is a difference between these two Markov processes.

\subsection{Main results}
It is worth remarking that the classical limit theorems basically concern that the averages taken over large samples converge to expected values in some sense, but say little or nothing about the rate of convergence. One way to address this is the theory of large deviations in modern probability theory.
Let $\{Y_n: n \geq 1\}$ be a sequence of the real-valued random variables defined on $(\Omega, \mathcal{F}, \mathbf{P})$. A function $I: \mathbb{R} \to [0,\infty]$ is called a \emph{good rate function} if it is lower
semi-continuous and has compact level sets. We say that the sequence $\{Y_n: n \geq 1\}$ satisfies a \emph{large deviation principle} (LDP for short) with speed $n$ and good rate function $I$, if for any Borel set $\Gamma$,
\begin{equation*}
-\inf_{x \in \Gamma^\circ}I(x) \leq \liminf_{n \to \infty} \frac{1}{n}\log \mathbf{P}(Y_n \in \Gamma) \leq \limsup_{n \to \infty} \frac{1}{n}\log \mathbf{P}(Y_n \in \Gamma) \leq -\inf_{x \in \overline{\Gamma}}I(x),
\end{equation*}
where $\Gamma^\circ$ and $\overline{\Gamma}$ denotes the interior and the closure of $\Gamma$ respectively. For instance, we can study the probability that the empirical mean of a sequence of random variables deviates away from its ergodic mean.  In general, these probabilities are exponentially small and follow a large deviation principle.
For an introduction to the theory of large deviations, we refer the reader to Dembo and Zeitouni \cite{lesD.Z1998} and Touchette \cite{lesTou09}.

\begin{theorem}\label{LDPC}
Let $\{C_n: n \geq 1\}$ be a $C$-process. Then $\left\{\frac{\log C_n - n}{n}: n \geq 1\right\}$ satisfies an LDP with speed $n$ and good rate function
\begin{equation}\label{rate function C}
I_C(x)=
\begin{cases}
x - \log (x +1),  &\text{if $x >-1$};\\
+\infty,  & \text{if $x \leq-1$}.
\end{cases}
\end{equation}
\end{theorem}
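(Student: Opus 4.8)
The plan is to deduce the LDP for $\{(\log C_n - n)/n\}$ from the classical Cram\'er theorem applied to the empirical mean $B_n^*/n$ of the i.i.d.\ exponential sequence, transferring it across the approximation $\log C_n \approx B_n^*$ by means of exponential equivalence.

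First I would record the rate function coming from $B_n^*/n = \frac{1}{n}\sum_{k=1}^n W_k$. Each $W_k$ is exponential with mean $1$, so its cumulant generating function is $\Lambda(\lambda) = -\log(1-\lambda)$ for $\lambda < 1$ and $+\infty$ otherwise, finite in a neighborhood of the origin. By Cram\'er's theorem, $B_n^*/n$ satisfies an LDP with speed $n$ and good rate function $\Lambda^*(z) = \sup_{\lambda<1}\{\lambda z + \log(1-\lambda)\}$. Differentiating and solving $1-\lambda = 1/z$ yields $\Lambda^*(z) = z - 1 - \log z$ for $z>0$ and $\Lambda^*(z) = +\infty$ for $z \le 0$. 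Since $(B_n^* - n)/n = B_n^*/n - 1$, a translation of the rate function shows that $\{(B_n^*-n)/n\}$ satisfies an LDP with good rate function $\Lambda^*(x+1)$; evaluating, $\Lambda^*(x+1) = x - \log(x+1)$ for $x > -1$ and $+\infty$ for $x \le -1$, which is exactly the function $I_C$ in \eqref{rate function C}.

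Next I would establish a non-random error bound on $\log C_n - B_n^*$. From $C_n = [C_{n-1}e^{W_n}]+1$ one has $C_{n-1}e^{W_n} < C_n \le C_{n-1}e^{W_n}+1$, hence each increment obeys $0 < \log C_n - \log C_{n-1} - W_n \le \log\!\bigl(1 + (C_{n-1}e^{W_n})^{-1}\bigr) \le C_{n-1}^{-1}$, using $e^{W_n}\ge 1$. Telescoping with $C_0 = 1$ gives $0 < \log C_n - B_n^* \le \sum_{k=1}^n C_{k-1}^{-1}$ almost surely. As $\{C_n\}$ is a strictly increasing integer sequence with $C_0=1$, we have $C_{k-1}\ge k$, so this sum is at most the harmonic sum and therefore $|\log C_n - B_n^*| \le 1 + \log n = o(n)$, with a deterministic $o(n)$ rate.

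Finally I would apply exponential equivalence. With $Y_n = (\log C_n - n)/n$ and $Z_n = (B_n^* - n)/n$, the bound gives $|Y_n - Z_n| \le (1+\log n)/n \to 0$, so for each $\delta>0$ the event $\{|Y_n-Z_n|>\delta\}$ is empty for all large $n$, whence $\limsup_{n\to\infty} \frac{1}{n}\log \mathbf{P}(|Y_n-Z_n|>\delta) = -\infty$. By the exponential equivalence theorem (Dembo--Zeitouni \cite{lesD.Z1998}, Theorem 4.2.13), $Y_n$ inherits the LDP of $Z_n$, with the good rate function $I_C$. The one step that genuinely needs care is the $o(n)$ control of $\log C_n - B_n^*$; once it is secured the transfer is automatic. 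This is precisely where the $C$-process and the $A$-process part ways in the large-deviation regime, and it explains why their rate functions can differ despite sharing the same law of large numbers, central limit theorem, and law of the iterated logarithm.
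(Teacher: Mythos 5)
Your proof is correct, but it follows a genuinely different route from the paper's. The paper works directly with the transition probabilities $p_{ij}=i/(j(j-1))$: an elementary two-sided estimate of $\sum_{k>j}\frac{j}{k(k-1)}(k/j)^\theta$ (their Lemma 2.1), iterated along the Markov chain, yields $\lim_n \frac{1}{n}\log\mathbf{E}(C_n^\theta)=\log\frac{1}{1-\theta}$ for $\theta<1$ and $+\infty$ for $\theta\ge 1$ (their Lemma 2.2); the LDP then follows from the G\"artner--Ellis theorem together with a Legendre-transform computation. You instead take Williams's representation $C_n=[C_{n-1}e^{W_n}]+1$ as the starting point, observe that strict monotonicity forces $C_{k-1}\ge k$ and hence the deterministic bound $0<\log C_n-B_n^*\le\sum_{k=1}^{n}C_{k-1}^{-1}\le 1+\log n$, and transfer Cram\'er's LDP for $B_n^*/n$ by exponential equivalence. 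Both arguments are sound, and your Cram\'er computation of $\Lambda^*(x+1)=x-\log(x+1)$ matches \eqref{rate function C}. Your version is shorter and more conceptual: it makes transparent why $I_C$ coincides with the Cram\'er rate function of the exponential empirical mean, and it pinpoints exactly where the $A$-process parts company (there $B_n^*-\log A_n$ is a.s.\ bounded but not uniformly in $\omega$; on the event $\{A_n=2\}$, of probability $2^{-n}$, the discrepancy is of order $n$, so exponential equivalence fails and the extra $\log 2$ enters $I_A$). The paper's moment/G\"artner--Ellis route buys uniformity of method: it is the argument one must use for the $A$-process anyway, it requires only the transition probabilities rather than the almost-sure representation, and it adapts immediately to arbitrary initial states $C_0=c$ as in their Section 3. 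The only point worth adding to your write-up is the standard remark that the LDP concerns only the laws of $(\log C_n-n)/n$, so it suffices to prove it for the particular version of the $C$-process supplied by the Williams coupling.
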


As an application of Theorem \ref{LDPC}, we immediately obtain the large deviations for modified Engel's series and record times by Examples \ref{m} and \ref{r}.

\begin{corollary}\label{RE}
The sequence $\left\{\frac{\log d_n - n}{n}: n \geq 1\right\}$ or $\left\{\frac{\log L_n - n}{n}: n \geq 1\right\}$ satisfies an LDP with speed $n$ and good rate function defined as (\ref{rate function C}). \end{corollary}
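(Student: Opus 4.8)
The plan is to reduce the problem to the classical Cram\'er theorem for the i.i.d.\ sums $B_n^* = W_1 + \cdots + W_n$ and then transfer the resulting LDP to $\log C_n$ by a comparison argument. Since an LDP is a statement about the law of the sequence and every $C$-process has the same law, I may work with Williams' representation $C_n = [C_{n-1}\exp(W_n)] + 1$, $C_0 = 1$, where the $W_k$ are i.i.d.\ exponential random variables with mean $1$. Writing $Y_n = (\log C_n - n)/n$ and $Z_n = (B_n^* - n)/n$, the argument splits into two parts: first establish the LDP for $Z_n$ with rate function $I_C$, and then show that $Y_n$ and $Z_n$ are exponentially equivalent, so that the LDP transfers to $Y_n$ (Dembo--Zeitouni \cite{lesD.Z1998}, Theorem 4.2.13).

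For the first part, Cram\'er's theorem gives that $B_n^*/n$ satisfies an LDP with speed $n$ and good rate function $\Lambda^*$, the Legendre--Fenchel transform of the cumulant generating function $\Lambda(\lambda) = \log \mathbf{E}[e^{\lambda W_1}] = -\log(1-\lambda)$ for $\lambda < 1$ (and $+\infty$ otherwise). A direct computation of $\Lambda^*(t) = \sup_{\lambda<1}\bigl(\lambda t + \log(1-\lambda)\bigr)$, optimized at $\lambda = 1 - 1/t$, yields $\Lambda^*(t) = t - 1 - \log t$ for $t > 0$ and $\Lambda^*(t) = +\infty$ for $t \le 0$. Since $Z_n = B_n^*/n - 1$ is a shift of $B_n^*/n$, the contraction principle (or a direct change of variables) shows that $Z_n$ satisfies an LDP with good rate function $I_C(x) = \Lambda^*(x+1) = x - \log(x+1)$ for $x > -1$ and $+\infty$ for $x \le -1$, which is precisely (\ref{rate function C}); lower semicontinuity and compact level sets are inherited from $\Lambda^*$.

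The second part, the exponential equivalence, is the step I expect to require the most care, and it is also where the contrast with $A$-processes ultimately originates, since there the floor operation carries no $+1$ correction. From the representation one has the deterministic two-sided bound $C_{n-1}\exp(W_n) < C_n \le C_{n-1}\exp(W_n) + 1$. The lower bound telescopes to $\log C_n > B_n^*$, while the upper bound gives $\log C_n \le \log C_{n-1} + W_n + 1/C_{n-1}$ (using $\log(1+t)\le t$ and $\exp(W_n) \ge 1$), which telescopes to $\log C_n \le B_n^* + \sum_{k=1}^n 1/C_{k-1}$. Because the transition law forces $C_k \ge C_{k-1}+1$ with $C_0 = 1$, we have $C_{k-1} \ge k$, hence $\sum_{k=1}^n 1/C_{k-1} \le 1 + \log n$. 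It is worth noting that the almost sure boundedness of $\{B_n^*-\log C_n\}$ recorded by Williams is not directly convenient here; instead this deterministic estimate is what is needed. Combining the two bounds gives $0 < \log C_n - B_n^* \le 1 + \log n$, so that $\abs{Y_n - Z_n} \le (1+\log n)/n \to 0$. Consequently, for every $\delta > 0$ one has $\mathbf{P}(\abs{Y_n - Z_n} > \delta) = 0$ for all sufficiently large $n$, which is far stronger than the super-exponential decay required for exponential equivalence. Theorem 4.2.13 of \cite{lesD.Z1998} then transfers the LDP from $Z_n$ to $Y_n$, completing the proof.
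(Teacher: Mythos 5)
Your proof is correct, but it takes a genuinely different route from the paper's. The paper obtains Corollary \ref{RE} as an immediate consequence of Theorem \ref{LDPC} (since Examples \ref{m} and \ref{r} identify $\{d_n\}$ and $\{L_n\}$ as $C$-processes), and Theorem \ref{LDPC} itself is proved via the G\"{a}rtner--Ellis theorem after computing $\lim_{n\to\infty}\frac{1}{n}\log\mathbf{E}(C_n^\theta)$ directly from the Markov transition probabilities (Lemmas \ref{inequality1} and \ref{inequality2}). You instead invoke Williams' coupling $C_n=[C_{n-1}\exp(W_n)]+1$, apply Cram\'{e}r's theorem to $B_n^*/n$, and transfer the LDP by exponential equivalence, using the deterministic sandwich $0<\log C_n-B_n^*\le\sum_{k=1}^n 1/C_{k-1}\le 1+\log n$ (valid because $C_{k-1}\ge k$); all of these steps check out, and since the LDP depends only on the law of each $\log C_n$, working with Williams' version is legitimate. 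What your argument buys is that it is more elementary and self-contained --- no moment asymptotics or G\"{a}rtner--Ellis needed --- and it turns the paper's informal remark that ``the approximation of $(\log C_n)/n$ by $B_n^*/n$ is enough'' in the large-deviation regime into a precise statement, while also making transparent why the same strategy must fail for $A$-processes (the floor without the $+1$ destroys the one-sided bound, and digits can stall at small values with only exponentially small probability). What the paper's moment-generating-function approach buys is uniformity of method: it is the same machinery that handles the $A$-process, where the strong approximation by $B_n^*$ is \emph{not} exponentially tight enough and the extra $\log 2$ branch of the rate function appears.
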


\begin{remark}
It is worth pointing out that Gut \cite{lesGut90, lesGut02} has considered the following tail probabilities for record times
\[
\mathbf{P}\big(|\log L_n -n| \geq n\varepsilon\big),
\]
where $\varepsilon>0$ and $n \in \mathbb{N}$. Moreover, he showed that these tail probabilities are exponentially
small (see \cite[Theorem 6]{lesGut90}). However, the Corollary \ref{RE} indicates that the probability of the rate event that $(\log L_n)/n$ deviates away from 1 decays to zero exponentially with an explicit rate function.
\end{remark}

Since an $A$-process is identical in law to the Engel's series, the large deviation principle for $A$-processes can be stated as follow (see Zhu \cite[Theorem 1.2]{lesZhu2014}).

\begin{theorem}(\cite{lesZhu2014})\label{LDPA}
Let $\{A_n: n \geq 1\}$ be an $A$-process. Then $\left\{\frac{\log A_n - n}{n}: n \geq 1\right\}$ satisfies an LDP with speed $n$ and good rate function
\begin{equation}\label{A RATE}
I_A(x)=
\begin{cases}
x - \log (x +1),  &\text{if $x >-\frac{1}{2}$};\\
-x - 1 + \log 2,  &\text{if $-1 \leq x \leq -\frac{1}{2}$};\\
+\infty,  & \text{if $x < -1$}.
\end{cases}
\end{equation}
\end{theorem}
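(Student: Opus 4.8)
The plan is to establish the LDP through the Gärtner--Ellis theorem, computing the scaled cumulant generating function (SCGF)
\[
\Lambda_A(\theta):=\lim_{n\to\infty}\frac1n\log\mathbf{E}\bigl[\exp\bigl(\theta(\log A_n-n)\bigr)\bigr]=-\theta+\lim_{n\to\infty}\frac1n\log\mathbf{E}\bigl[A_n^{\theta}\bigr],
\]
and then identifying $I_A$ as its Legendre transform. First I would analyze $\mathbf{E}[A_n^{\theta}]=\mathbf{E}[(a_n+1)^{\theta}]$ via the Markov transition probabilities (\ref{A}) (equivalently, the spectral radius of the associated transfer operator), splitting into three regimes. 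For $\theta\in(-1,1)$ the chain behaves, up to the almost surely bounded error $B_n^*-\log A_n$, like the i.i.d.\ sum $B_n^*$, so the relevant eigenvalue is $(1-\theta)^{-1}$, the moment generating function of an $\mathrm{Exp}(1)$ variable, giving $\Lambda_A(\theta)=-\theta-\log(1-\theta)$, exactly the SCGF underlying Theorem~\ref{LDPC}. For $\theta\ge 1$ the bound $A_n\le e^{B_n^*}+1$ forces $\Lambda_A(\theta)=+\infty$. The decisive regime is $\theta\le-1$: here $A_n^{\theta}$ is largest at the minimal value $A_n=2$, and the computation $\mathbf{P}(a_n=1\mid a_{n-1}=1)=\mathbf{P}(W_n<\log2)=\tfrac12$ shows the chain stalls at the bottom state with probability $2^{-n}$; resolving the competition between this atom and the bulk in favour of the atom yields $\lim_n\frac1n\log\mathbf{E}[A_n^{\theta}]=-\log2$, so $\Lambda_A(\theta)=-\theta-\log2$. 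This last piece is precisely where the $A$-process departs from the $C$-process, which cannot stall because $C_n=[C_{n-1}e^{W_n}]+1>C_{n-1}$.

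With $\Lambda_A$ in hand, a Legendre transform computation gives $I_A=\Lambda_A^{*}$ as in (\ref{A RATE}); the linear piece of $\Lambda_A$ on $(-\infty,-1]$ is dual to the affine piece of $I_A$ on $[-1,-1/2]$, and $\Lambda_A$ has a kink at $\theta=-1$ (left derivative $-1$, right derivative $-\tfrac12$). Since $\Lambda_A$ is finite near the origin, the Gärtner--Ellis upper bound gives $\limsup_n\frac1n\log\mathbf{P}(Y_n\in F)\le-\inf_{x\in F}I_A(x)$ for compact $F$, where $Y_n=(\log A_n-n)/n$; the bound $A_n\le e^{B_n^*}+1$ together with Cramér's theorem supplies exponential tightness and upgrades this to all closed sets. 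For the lower bound, Gärtner--Ellis covers every exposed point of $I_A$, which are exactly the $x\in(-1/2,\infty)$ on the strictly convex branch, whose exposing hyperplane has slope $\theta=x/(x+1)\in(-1,1)$, a point of differentiability of $\Lambda_A$.

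The hard part will be the lower bound on the affine segment $x\in(-1,-1/2)$, whose points are \emph{not} exposed: the supporting line of slope $-1$, dual to the kink of $\Lambda_A$ at $\theta=-1$, touches the whole segment, so the Gärtner--Ellis lower bound is silent there. I would supply it by a direct construction that exploits the two competing mechanisms simultaneously. Fix $x\in(-1,-1/2)$ and a parameter $u\in(0,1)$, and consider the event that the chain stalls at $a=1$ for the first $\lfloor(1-u)n\rfloor$ steps and then, over the remaining $\approx un$ steps, performs a Cramér deviation in which the increments $W_k$ have empirical mean close to a target $y$ with $uy=x+1$. Using the boundedness of $B_n^*-\log A_n$ (so that $\log A_n\approx un\,y$ once the state has left the bottom), the probability of this event is of order
\[
\exp\Bigl(-n\bigl[(1-u)\log2+u(y-1-\log y)\bigr]\Bigr).
\]
Optimizing the exponent under the constraint $uy=x+1$ gives $u^{*}=2(x+1)$ and $y^{*}=\tfrac12$, and the optimal value is exactly $-x-1+\log2=I_A(x)$, furnishing the missing lower bound and completing the LDP.

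The main technical care in the last step is controlling the flooring error during the growth phase: starting from the bottom state the approximation $\log A_n\approx\sum W_k$ is not immediate, but the initial transient from $a=1$ to a moderately large value costs only $o(n)$ and is absorbed into the rate. One must also verify that the constructed trajectories place $Y_n$ inside the prescribed neighbourhood of $x$ with the stated exponential probability, which is a routine but careful bookkeeping of the Markov property across the stalling and growth phases. Everything else reduces to the convex-analytic identities already checked, together with the standard Gärtner--Ellis and Cramér estimates.
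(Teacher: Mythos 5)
Your proposal follows the same core route as the paper's sketch---compute $\Lambda_A(\theta)=\lim_n\frac1n\log\mathbf{E}[\exp(\theta(\log A_n-n))]$, obtaining $-\theta-\log(1-\theta)$ for $-1<\theta<1$, $-\theta-\log 2$ for $\theta\le-1$ and $+\infty$ for $\theta\ge1$, and then take the Legendre transform via G\"{a}rtner--Ellis---and your identification of the atom $\mathbf{P}(A_n=2)=2^{-n}$ as the source of the $-\log 2$ plateau matches the mechanism the paper isolates (it is exactly the observation made in the paper's Section 3). Where you go beyond the paper's text is in confronting the fact that $\Lambda_A$ has a kink at $\theta=-1$, so the points of the affine segment $(-1,-1/2)$ are not exposed and the vanilla G\"{a}rtner--Ellis lower bound is silent there; the paper simply invokes ``the G\"{a}rtner--Ellis theorem'' and defers the full argument to Zhu \cite{lesZhu2014}. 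Your stall-then-grow construction (hold $A=2$ for $(1-u)n$ steps at cost $(1-u)\log 2$, then run a Cram\'{e}r tilt to mean $y$ with $uy=x+1$) is correct: the optimization does give $u^{*}=2(x+1)$, $y^{*}=\tfrac12$ and value $-x-1+\log 2=I_A(x)$, so it supplies exactly the missing lower bound, and it explains probabilistically why the rate function is affine there (a one-parameter family of optimal strategies interpolating between pure stalling at $x=-1$ and pure tilting at $x=-1/2$). One small correction: for $\theta\ge1$ the \emph{upper} bound $A_n\le e^{B_n^*}+1$ cannot force $\mathbf{E}[A_n^\theta]=+\infty$; the divergence comes from the transition law itself, since $\mathbf{E}[A_n^\theta\mid A_{n-1}=i]=\sum_{j\ge i}j^{\theta}\frac{i-1}{j(j-1)}=+\infty$ whenever $\theta\ge1$ (equivalently, a suitable lower bound such as $a_n\ge a_{n-1}e^{W_n}-1$). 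With that sign fixed, the argument is complete and in fact more self-contained than the paper's sketch.
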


\begin{remark}
As we have seen in Theorem \ref{LDPC}, the rate function $I_C$ completely coincides with the rate function for
$B_n^*/n$ (see \cite[Example 3.2]{lesTou09}). However, the rate function $I_A$ in Theorem \ref{LDPA} coincides
with the rate function for $B_n^*/n$ only when $x >-1/2$. That is to say, the approximation of $(\log A_n)/n$
by the empirical mean of i.i.d.~exponential random variables with mean 1 is not enough in the context of large deviations (see Zhu \cite[Remark 1.4]{lesZhu2014}).
\end{remark}

\section{Proof of Main Results}
We use the notation $\mathbf{E}(\xi)$ to denote the expectation of a random variable $\xi$ with respect to the probability measure $\mathbf{P}$.
In this section, we will give the proofs of Theorems \ref{LDPC} and \ref{LDPA}. In fact, the large deviations for stochastic processes related Markov chain
occurring in number theory have been discussed by Zhu \cite{lesZhu2014}, Fang \cite{lesFangSPL, lesFangJNT}, and Fang and Shang \cite{lesFS}. More precisely, Zhu \cite{lesZhu2014} studied the large deviations for Engel's series whose digit sequence is non-decreasing and forms a time-homogeneous Markov chain with exact one-step transition probabilities.
Fang and Shang \cite{lesFS} considered the large deviations for Engel continued fractions whose digit sequence is also non-decreasing but does not form a Markov chain. Since the digit sequences of Engel's series and Engel continued fractions are non-decreasing, their rate functions have the similar structure in form.
In \cite{lesFangSPL} and \cite{lesFangJNT}, the author mainly investigated large deviations for two expansions of real numbers whose digit sequence both are strictly increasing and forms a time-homogeneous Markov chain with exact transition probabilities.
So we will use the methods in \cite{lesFangSPL} and \cite{lesFangJNT} to prove our main result. To do this we need an elementary lemma.

\begin{lemma}\label{inequality1}
Let $\theta <1$ be a real number. For any $j \geq 1$, we have
\begin{equation*}
\left(1+\frac{1}{j}\right)^{\theta -1}\cdot \frac{1}{1-\theta} \leq \sum_{k=j+1}^{\infty}\frac{j}{k(k-1)}\left(\frac{k}{j}\right)^\theta \leq \left(1+\frac{1}{j}\right)\cdot \frac{1}{1-\theta}.
\end{equation*}
\end{lemma}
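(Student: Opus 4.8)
The plan is to reduce the sum to a comparison with the elementary integral $\int x^{\theta-2}\,dx$. First I would rewrite the general term by pulling out the factor $j^{1-\theta}$: since
\[
\frac{j}{k(k-1)}\left(\frac{k}{j}\right)^\theta = j^{1-\theta}\,\frac{k^{\theta-1}}{k-1} = j^{1-\theta}\, k^{\theta-2}\cdot\frac{k}{k-1},
\]
the whole sum equals $j^{1-\theta}\sum_{k=j+1}^\infty k^{\theta-2}\frac{k}{k-1}$. The factor $\frac{k}{k-1}$ is the only deviation from a pure power, and it is precisely what produces the correction factors $(1+\frac1j)^{\theta-1}$ and $(1+\frac1j)$ in the two bounds.

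Next I would control that factor uniformly over the range of summation. Since $\frac{k}{k-1}$ is decreasing in $k$ and always exceeds $1$, for every $k \ge j+1$ we have $1 \le \frac{k}{k-1} \le \frac{j+1}{j} = 1+\frac1j$. Hence each term is squeezed, $k^{\theta-2} \le k^{\theta-2}\frac{k}{k-1} \le \left(1+\frac1j\right)k^{\theta-2}$, and the problem is reduced to estimating the power sum $\sum_{k=j+1}^\infty k^{\theta-2}$ from both sides.

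For the power sum I would use the standard monotone integral comparison. Because $\theta < 1$, the function $x^{\theta-2}$ is positive, decreasing, and integrable on $[j,\infty)$, with $\int_a^\infty x^{\theta-2}\,dx = \frac{a^{\theta-1}}{1-\theta}$. Decreasingness gives
\[
\int_{j+1}^\infty x^{\theta-2}\,dx \le \sum_{k=j+1}^\infty k^{\theta-2} \le \int_{j}^\infty x^{\theta-2}\,dx,
\]
that is, $\frac{(j+1)^{\theta-1}}{1-\theta} \le \sum_{k=j+1}^\infty k^{\theta-2} \le \frac{j^{\theta-1}}{1-\theta}$. Combining the lower squeeze with the lower integral bound and multiplying by $j^{1-\theta}$, and combining the upper squeeze with the upper integral bound and multiplying by $j^{1-\theta}\left(1+\frac1j\right)$, then simplifying $j^{1-\theta}(j+1)^{\theta-1} = \left(1+\frac1j\right)^{\theta-1}$ and $j^{1-\theta}\cdot j^{\theta-1} = 1$, yields exactly the two claimed inequalities.

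The computations are routine; the one point that needs care is to apply the integral comparison to the clean power $x^{\theta-2}$ rather than to $g(x) = \frac{x^{\theta-1}}{x-1}$ directly. The latter blows up at $x=1$, so for $j=1$ a naive bound such as $\sum_{k\ge 2} g(k) \le \int_1^\infty g$ would be useless, the integral being divergent. Isolating the bounded factor $\frac{k}{k-1}$ first is what keeps every integral convergent and the constant $\frac{1}{1-\theta}$ finite, and it is exactly the condition $\theta<1$ that guarantees both convergence of $\int^\infty x^{\theta-2}\,dx$ and positivity of $1-\theta$.
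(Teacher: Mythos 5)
Your proof is correct and complete: the factorization $\frac{j}{k(k-1)}\bigl(\frac{k}{j}\bigr)^\theta = j^{1-\theta}k^{\theta-2}\frac{k}{k-1}$, the uniform squeeze $1\le \frac{k}{k-1}\le 1+\frac1j$ on the range $k\ge j+1$, and the monotone integral comparison for $\sum k^{\theta-2}$ all check out, and the exponents recombine exactly as claimed. The paper itself only cites the proof of Lemma 3.1 of \cite{lesFangJNT} rather than writing one out, and your integral-comparison argument is precisely the standard route taken there, so this is essentially the same approach, with the added merit of being self-contained and of explicitly handling the $j=1$ case.
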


\begin{proof}
The proof is similar with the proof of Lemma 3.1 in \cite{lesFangJNT}.
\end{proof}

Recall that $C_0, C_1, C_2, \cdots$ is a strictly increasing stochastic process and also forms a time-homogeneous Markov chain. Lemma \ref{inequality1} and the methods of Lemma 3.2 in \cite{lesFangJNT} immediately yield that

\begin{lemma}\label{inequality2}
\begin{equation*}
\lim_{n \to \infty}\frac{1}{n}\log \mathbf{E}(C_n^\theta) =
\begin{cases}
\log \frac{1}{1-\theta}, &\text{if $\theta < 1$}; \\
+\infty , & \text{if $\theta \geq 1$}.
\end{cases}
\end{equation*}
\end{lemma}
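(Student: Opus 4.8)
The plan is to reduce the statement to a one-step moment estimate via the Markov property and then telescope. First I would observe that, conditioning on $C_n$ and using the transition probabilities $p_{jk} = j/(k(k-1))$, the one-step conditional moment is exactly
\[
\mathbf{E}\big(C_{n+1}^\theta \mid C_n = j\big) = \sum_{k=j+1}^{\infty} \frac{j}{k(k-1)}\,k^\theta = j^\theta \sum_{k=j+1}^{\infty} \frac{j}{k(k-1)}\Big(\frac{k}{j}\Big)^\theta ,
\]
so that the quantity bounded in Lemma \ref{inequality1} is precisely $\mathbf{E}(C_{n+1}^\theta \mid C_n = j)/C_n^\theta$ evaluated at $C_n = j$. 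Note that factoring out $j^\theta$ is an exact identity valid for every real $\theta$, so no sign restriction on $\theta$ enters at this stage.

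Next, restricting to $\theta < 1$ and writing $a_n := \mathbf{E}(C_n^\theta)$, I would feed Lemma \ref{inequality1} into the tower property $a_{n+1} = \mathbf{E}\big[\mathbf{E}(C_{n+1}^\theta \mid C_n)\big]$. The only nuisance is that the correction factors $(1+1/C_n)^{\theta-1}$ and $(1+1/C_n)$ depend on the random variable $C_n$. This is where the deterministic bound $C_n \geq n+1$ (immediate from $C_0 = 1$ and $C_{m+1} \geq C_m + 1$) does the work: since $\theta - 1 < 0$, the map $j \mapsto (1+1/j)^{\theta-1}$ is increasing, hence $(1+1/C_n)^{\theta-1} \geq (1+1/(n+1))^{\theta-1}$, while $(1+1/C_n) \leq 1+1/(n+1)$. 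Pulling these uniform constants out of the expectation yields the two-sided recursion
\[
\frac{1}{1-\theta}\Big(1+\tfrac{1}{n+1}\Big)^{\theta-1} a_n \;\leq\; a_{n+1} \;\leq\; \frac{1}{1-\theta}\Big(1+\tfrac{1}{n+1}\Big) a_n .
\]

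I would then take logarithms and telescope from $a_0 = 1$. The key point is that the correction terms assemble into a telescoping product $\prod_{m=1}^{n}(1+1/m) = n+1$, so both the upper and lower chains contribute only $\log(n+1)$ times a bounded factor; after dividing by $n$ this term vanishes and leaves $\frac{1}{n}\log a_n \to \log\frac{1}{1-\theta}$. I would emphasize that this refinement is genuinely needed: the crude estimate $1+1/C_n \leq 2$ would produce the spurious value $\log\frac{1}{1-\theta} + \log 2$, so the sharper bound $C_n \geq n+1$ is exactly what removes the extra constant.

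Finally, for $\theta \geq 1$ Lemma \ref{inequality1} no longer applies, but the conclusion is in fact easier. The one-step summand $\frac{j}{k(k-1)}k^\theta$ is of order $k^{\theta-2}$, which is non-summable for $\theta \geq 1$, so $\mathbf{E}(C_{n+1}^\theta \mid C_n = j) = +\infty$ for every admissible $j$; conditioning on any value of $C_{n-1}$ attained with positive probability then forces $\mathbf{E}(C_n^\theta) = +\infty$ for all $n \geq 1$, whence $\frac{1}{n}\log \mathbf{E}(C_n^\theta) = +\infty$ and the limit is trivially $+\infty$. The main (and essentially only) obstacle is thus the bookkeeping of the random correction factors in the $\theta < 1$ case, which the elementary bound $C_n \geq n+1$ resolves cleanly.
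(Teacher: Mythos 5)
Your argument is exactly the one the paper intends: the paper's ``proof'' simply invokes Lemma \ref{inequality1} together with the Markov property and the strict monotonicity of $\{C_n\}$ (i.e.\ $C_n \geq n+1$), citing the method of Lemma 3.2 of \cite{lesFangJNT}, and your write-up correctly fills in precisely those details --- the one-step conditional moment identity, the uniform control of the factors $(1+1/C_n)^{\theta-1}$ and $(1+1/C_n)$ via $C_n \geq n+1$, the telescoping to $\log(n+1)$ error terms, and the divergence for $\theta \geq 1$. The proposal is correct and takes essentially the same approach as the paper.
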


Now we are ready to prove Theorem \ref{LDPC}.

\begin{proof}[Proof of Theorem \ref{LDPC}]
For any $\theta \in \mathbb{R}$, we define the logarithmic moment generating function (see Dembo and Zeitouni \cite{lesD.Z1998}) for $C$-processes as
\[
 \Lambda_C(\theta) = \lim_{n \to \infty} \frac{1}{n}\log \mathbf{E}\left(\exp{\left(\frac{\log C_n -n}{n}\cdot n\theta\right)}\right)
\]
if the limit exists. In view of Lemma \ref{inequality2}, we know that
\[
\Lambda_C(\theta) = - \theta + \lim_{n \to \infty}\frac{1}{n}\log \mathbf{E}(C_n^\theta) =
\begin{cases}
-\theta - \log(1-\theta), &\text{if $\theta < 1$}; \\
+\infty,  & \text{if $\theta \geq 1$}.
\end{cases}
\]
By the G\"{a}rtner-Ellis theorem (see \cite[Theorem 2.3.6]{lesD.Z1998}), we obtain that the sequence $\left\{\frac{\log C_n -n}{n}: n\geq 1\right\}$ satisfies an LDP with speed $n$ and good rate function, i.e., the Legendre transform of $\Lambda_C(\theta)$,
\begin{equation*}
I_C(x) = \sup_{\theta \in \mathbb{R}}\left\{\theta x - \Lambda_C(\theta)\right\}\ \text{for all}\ x \in \mathbb{R}. \end{equation*}
Now we will show that $I_C(x)$ is as defined in the Theorem \ref{LDPC}. Since $\Lambda_C(\theta) =+\infty$ if
$\theta \geq 1$, then $I_C(x)$ is alternatively given by
\[
I_C(x) = \sup_{\theta < 1}\left\{\theta x + \theta + \log(1-\theta)\right\}\ \text{for all}\ x \in \mathbb{R}.
\]
For any $x \in \mathbb{R}$, let $f(\theta) = \theta x + \theta + \log(1-\theta)$ for all $\theta < 1$. It is worth
pointing out that $f$ is strictly concave and that $\theta = x/(x+1)$ is the unique maximal point of it.
When $x \leq -1$, we know that $f(\theta) = \theta (x +1)+ \log(1-\theta)$ is decreasing with respect to $\theta$ on the interval $(-\infty, 1)$ and hence that $I_C(x)= +\infty$. If $x >-1$, we have that $x/(x+1)<1$. That is to say,
the unique maximal point of $f$ locates in its domain. So, $I_C(x)= x -\log (x+1)$ for any $x >-1$. Thus, we determined the rate function $I_C(x)$ defined as
\begin{equation*}
I_C(x)=
\begin{cases}
x - \log (x +1),  &\text{if $x >-1$};\\
+\infty,  & \text{if $x \leq-1$}.
\end{cases}
\end{equation*}
\end{proof}

\begin{proof}[Proof of Theorem \ref{LDPA}]
Since the proof is similar to the proof of Theorem \ref{LDPC}, we confine ourselves to providing a sketch.
Although $A$-processes are non-decreasing (not strictly increasing) Markov chains, the methods in the proof of Theorem \ref{LDPC} are still valid for $A$-processes if we properly modify the corresponding proofs of Lemmas \ref{inequality1} and \ref{inequality2}.
In fact, we can obtain
\begin{equation*}
\lim_{n \to \infty}\frac{1}{n}\log \mathbf{E}(A_n^\theta) =
\begin{cases}
\max\left\{-\log 2,\log \frac{1}{1-\theta}\right\}, &\text{if $\theta < 1$}; \\
+\infty , & \text{if $\theta \geq 1$},
\end{cases}
\end{equation*}
(see \cite[Remark 6]{lesFS} for more details).
Replacing $C$ by $A$ in the relevant objects defined as in the proof of Theorem \ref{LDPC}, we actually deduce that
\[
\Lambda_A(\theta) =
\begin{cases}
-\theta - \log 2, &\text{if $\theta \leq -1$}; \\
-\theta - \log(1-\theta), &\text{if $ -1<\theta < 1$}; \\
+\infty,  & \text{if $\theta \geq 1$}.
\end{cases}
\]
By the G\"{a}rtner-Ellis theorem, we know the sequence $\left\{\frac{\log A_n -n}{n}: n\geq 1\right\}$ satisfies an LDP and its rate function $I_A$ given by (\ref{A RATE}) which is the Legendre transform of $\Lambda_A(\theta)$ in fact. It is worth remarking that $\log 2$ appears in this formula for $A$-processes by comparing with the result of Lemma \ref{inequality2}. This fact eventually leads to the difference of rate functions for $C$-processes and $A$-processes.
\end{proof}

\section{Futher Remarks}
This section is devoted to giving some further remarks about the differences between $C$-processes and $A$-processes.

1. It is clear to see $I_{C}(x) \geq I_{A}(x)$ for any $x \in \mathbb{R}$. Strict inequality holds for the interval $[-1,-1/2)$. This shows that the decay speed of the probability for which $(\log C_n)/n$ deviates away from 1 is faster than that of $(\log A_n)/n$. In fact, to understand the difference between $I_{C}$ and $I_{A}$, we take value of $x$ between $-1$ and $-1/2$ and then $A_n =2$ implies $(\log A_n -n)/n \leq x$ for sufficiently large $n$. By the definition of $A$-processes, we have $\mathrm{P}(A_n =2) = 2^{-n}$ and hence that
\[
\limsup_{n \to \infty}\frac{1}{n} \log \mathrm{P}\left(\frac{\log A_n -n}{n} \leq x\right) \geq  -\log 2,
\]
which implies $I_A(-1) \leq \log 2$, whereas this type of behaviour is ruled out for the $C$-process.

2. Now we consider the large deviation problems for $C$-processes with $C_0 = c~(c \in \mathbb{N})$ and $A$-processes with $A_0 = a~(a \geq 2$ and are integers), our methods in this paper imply that this class of $C$-processes also satisfy an LDP and their rate function are still $I_C(x)$ defined as (\ref{rate function C}); while these $A$-processes satisfy an LDP depending on the initial value $A_0=a$, more precisely,
the good rate function is
\begin{equation*}
I_{A,a}(x)=
\begin{cases}
x - \log (x +1),  &\text{if $x >-1+1/a$};\\
(1-a)(x + 1) + \log a,  &\text{if $-1 \leq x \leq -1+1/a$};\\
+\infty,  & \text{if $x < -1$},
\end{cases}
\end{equation*}
see Zhu \cite[Remark 1.3]{lesZhu2014}. Clearly, $I_A=I_{A,2}$ and $I_{A,a} \to I_C$ as $a \to +\infty$. This indicates that the initial value does not impact the large deviation result for $C$-processes, but it will change the rate function of large deviations for $A$-processes.

\section{Conclusions}
Williams \cite{lesWil73} used a strong approximation method for two interesting discrete Markov processes, namely $C$-processes and $A$-processes, and proved that these two processes share the classical limit theorems, even other laws which hinge on the \emph{Invariance Principle}. As a consequence, the Engel's series and modified Engel's series of real numbers have the same classical limit theorems. Moreover, these two different representations of real numbers also have the same fractal structure (see Liu and Wu \cite{lesLW03}, and Wang and Wu \cite{lesWW06}). However, our main results indicate that they have a difference in the context of large deviations. From this point of view, large deviation is a more useful tool in such questions.

In fact, the fractal and large deviation are two main aspects of dealing with the almost everywhere results.
The former is concerned with the fractal structure (such as, Hausdorff dimension) of the set of points for which the desired almost everywhere result does not hold or attains any other values; 
while the latter considers the speeds of probabilities of the events that the desired almost everywhere result deviates away from its ergodic mean. 
What is interesting is that these two different ways have some relations in some special cases (see Denker and Kesseb\"{o}hmer \cite{lesDK01}, Fang et al.~\cite{lesFWLa,lesFWL}, Pesin and Weiss \cite{lesPW01}).

{\bf Acknowledgement}
We wish to thank the referee for the helpful comments.
The authors also would like to thank Professor Jonathan Warren and Professor Sandy Davie for useful discussions and suggestions.


\begin{thebibliography}{10}

\bibitem{lesCha52} K. Chandler, {\it The distribution and frequency of record values}, J. Roy. Statist. Soc. Ser. B. 14 (1952), 220--228.


\bibitem{lesD.Z1998} A. Dembo and O. Zeitouni, {\it Large Deviations Techniques and Applications}, 2nd Edition, Springer-Verlag, New York, 1998.





\bibitem{lesDK01} M. Denker and M. Kesseb\"{o}hmer, {\it Thermodynamic formalism, large deviation, and multifractals}, Stochastic climate models (Chorin, 1999), 159--169, Progr. Probab., 49, Birkh\"{a}user, Basel, 2001.




\bibitem{lesE.R.S58} P. Erd\H{o}s, A. R\'{e}nyi and P. Sz\"{u}sz, {\it On Engel's and Sylvester's series}, Ann. Univ. Sci. Budapest. E\"{o}tv\"{o}s. Sect. Math. 1 (1958), 7--32.


\bibitem{lesFangSPL} L. Fang, {\it Large and moderate deviations for modified Engel continued fractions}, Statist. Probab. Lett. 98 (2015), 98--106.



\bibitem{lesFangJNT} L. Fang, {\it Large and moderate deviation principles for alternating Engel expansions}, J. Number Theory 156 (2015), 263--276.


\bibitem{lesFS} L. Fang and L. Shang, {\it Large and moderate deviation principles for Engel continued fractions}, arXiv:1604.05155.



\bibitem{lesFWLa} L. Fang, M. Wu and B. Li, {\it Beta-expansion and continued fraction expansion of real numbers}, arXiv:1603.01081.



\bibitem{lesFWL} L. Fang, M. Wu and B. Li, {\it The denominators of convergents for continued fractions}, submitted.




\bibitem{lesGS75} J. Galambos and E. Seneta, {\it Record times}, Proc. Amer. Math. Soc. 50 (1975), 383--387.




\bibitem{lesGut90} A. Gut, {\it Convergence rates for record times and the associated counting process}, Stochastic Process. Appl. 36 (1990), 135--151.



\bibitem{lesGut02} A. Gut, {\it Precise asymptotics for record times and the associated counting process}, Stochastic Process. Appl. 101 (2002), 233--239.




\bibitem{lesLW03} Y.-Y. Liu and J. Wu, {\it Some exceptional sets in Engel expansions}, Nonlinearity 16 (2003), no. 2, 559--566.





\bibitem{lesPW01} Y. Pesin and H. Weiss, {\it The multifractal analysis of Birkhoff averages and large deviations}, Global analysis of dynamical systems, 419--431, Inst. Phys., Bristol, 2001.







\bibitem{lesPfe87}D. Pfeifer, {\it On a joint strong approximation theorem for record and inter-record times}, Probab. Theory Related Fields 75 (1987), 213--221.



\bibitem{lesRen62A} A. R\'{e}nyi, {\it A new approach to the theory of Engel's series}, Ann. Univ. Sci. Budapest. E\"{o}tv\"{o}s Sect. Math. 5 (1962), 25--32.



\bibitem{lesRen62T} A. R\'{e}nyi,  {\it Th\'{e}orie des \'{e}l\'{e}ments saillants d'une suite d'observations (French)}, Ann. Fac. Sci. Univ. Clermont-Ferrand 8 (1962), 7--13.

\bibitem{lesTou09} H. Touchette, {\it The large deviation approach to statistical mechanics}, Phys. Rep. 478 (2009), 1--69.


\bibitem{lesWW06} B.-W. Wang, and J. Wu, {\it The growth rates of digits in the Oppenheim series expansions}, Acta Arith. 121 (2006), no. 2, 175--192.



\bibitem{lesWHK14} G. Wergen, A. Hense and J. Krug, {\it Record occurrence and record values in daily and monthly temperatures}, Climate dynamics 42 (2014), 1275--1289.



\bibitem{lesWil73} D. Williams, {\it  On R\'{e}nyi's ``record" problem and Engel's series}, Bull. London Math. Soc. 5 (1973), 235-237.




\bibitem{lesZhu2014} L. Zhu, {\it On the large deviations for Engel's, Sylvester's series and Cantor's products}, Electron. Commun. Probab. 19 (2014), 1--9.

\end{thebibliography}
\end{document}